\documentclass[12pt,a4paper]{article}
\usepackage[a4paper,top=2cm,bottom=2cm,left=2cm,right=2cm,bindingoffset=5mm]{geometry}
\usepackage[english
]{babel}
\usepackage{amsfonts}
\usepackage{indentfirst}
\usepackage{color}
\usepackage{graphicx}

\usepackage{amssymb}
\usepackage{amsmath}
\usepackage{latexsym}
\usepackage{amsthm}
\usepackage{mathrsfs}
\usepackage{hyperref}
\hypersetup{colorlinks=true,linkcolor=blue}

\hyphenation{}
\theoremstyle{plain}

\linespread{1.5}

\newtheorem{teo}{Theorem}[section]

\newtheorem{lemma}[teo]{Lemma}
\newtheorem{prop}[teo]{Proposition}
\newtheorem{corol}[teo]{Corollary}

\theoremstyle{definition}

\theoremstyle{remark}
\newtheorem{oss}[teo]{Remark}
{\left\lbrace\begin{array}{@{}l@{}}}%
{\end{array}\right.}

\newcommand{\R}{\mathbb{R}}

\newcommand{\N}{\mathbb{N}}
\newcommand{\elle}{\mathcal{L}}

\DeclareMathOperator{\support}{supp}

\date{}
\begin{document}
\selectlanguage{english}%

\title{\bf{Regularity results for nonlocal evolution Venttsel' problems}}
\author{Simone Creo\thanks{Dipartimento di Scienze di Base e Applicate per l'Ingegneria, Sapienza Universit\`{a} di Roma, Via A. Scarpa 16, 00161 Roma, Italy.
   E-mail: simone.creo@sbai.uniroma1.it, mariarosaria.lancia@sbai.uniroma1.it},\setcounter{footnote}{6}
Maria Rosaria Lancia$^*$, Alexander Nazarov\thanks{St.~Petersburg Department of Steklov Mathematical Institute, Fontanka 27, 191023 St.~Petersburg, Russia, and St.~Petersburg State University, Universitetskii pr. 28, 198504 St.~Petersburg, Russia. E-mail: al.il.nazarov@gmail.com}
		}
\maketitle

\begin{abstract}
\noindent We consider parabolic nonlocal Venttsel' problems in polygonal and piecewise smooth two-dimensional domains and study existence, uniqueness and regularity in (anisotropic) weighted Sobolev spaces of the solution.
\end{abstract}

\bigskip

\noindent\textbf{Keywords: }Venttsel' problems, nonlocal operators, anisotropic weighted Sobolev spaces, piecewise smooth domains.\\

\noindent{\textbf{AMS Subject Classification:} 35K20, 35B65. Secondary: 35R02, 35B45.}

 \pagestyle{myheadings} \thispagestyle{plain}

\section*{Introduction}

\noindent The aim of this paper is to study the heat equation with nonlocal Venttsel' boundary conditions in a bounded polygonal domain $\Omega\subset\R^2$. In the cornerstone paper of Venttsel' \cite{vent59}, a nonlocal term already appears; only recently, many papers deal with nonlocal Venttsel' problems both in the case of smooth and irregular domains. Among the others, we refer to \cite{LVSV}, \cite{VELEZ2015}, \cite{WAR12}, \cite{ambprodi} and the references listed in. In this paper, we consider a nonlocal term which can be regarded as a suitable version of the regional fractional Laplacian $(-\Delta)^s$, for $s\in(0,1)$, see \cite{nostromassimo} for applications.

Actually, Venttsel' problems in irregular domains (in particular, domains with pre-fractal or fractal boundary) have been widely investigated, see, e.g., \cite{JEE,LRDV}, where the reader can find also the motivations. We refer, for local linear and quasi-linear Venttsel' problems, to \cite{AP-NAsurv}, \cite{AP-NA1}, \cite{AP-NA2}, \cite{nazarov}, \cite{Ar-Me-Pa-Ro}, \cite{Gol-Ru}, \cite{warma2009}, \cite{VELEZ2014}, \cite{CPAA}, \cite{miovalerio}, \cite{nostroJEE}, \cite{ApNazPalSof} and the references listed in.

In this paper, our goal is to prove regularity results in weighted Sobolev spaces for the weak solution of the problem at hand, thus extending the results obtained in \cite{nostronazarov} for the elliptic case. When considering the numerical approximation of this problem, to prove regularity results is a key issue for obtaining optimal a priori error estimates. To this regard, see \cite{Ce-Dl-La,Ce-La-Hd} for the local case, and \cite{nostromassimo} for the nonlocal case, under stronger assumptions on the data.

As in the elliptic case \cite{nostronazarov}, it is crucial to prove that the weak solution of the nonlocal Venttsel' problem belongs for a.e. $t$ to the space $H^2(\partial\Omega)$; this is achieved by the so-called \emph{Munchhausen trick}, see, e.g., \cite{nostronazarov}, \cite{ApNazPalSof}. To this aim, we introduce suitable anisotropic weighted Sobolev spaces of Kondrat'ev type, see \cite{kond,kozlov}, where the weight is the distance from the set of vertices. The techniques used to prove the regularity on the boundary, in the parabolic case, deeply rely also on sophisticated extension theorem in anisotropic Sobolev spaces.

The paper is organized as follows. In Section \ref{sec1} we define the domain and the functional spaces appearing in this paper, and state the problem. In Section \ref{sec2} we prove a crucial a priori estimate for the solution. In Section \ref{sec3} we give an existence and uniqueness result for weak and strong solutions of the parabolic nonlocal Venttsel' problem. 
Appendix \ref{appendice} contains the extension theorem from the broken surface to the whole space.

\section{Statement of the problem}\label{sec1}
\setcounter{equation}{0}

\noindent Let $\Omega\subset\R^2$ be a domain with polygonal boundary $\partial\Omega$ with vertices $V_j$, for $j=1,\dots,N$. Namely, we suppose that $\partial\Omega$ is made by $N\geq 3$ segments $l_j$, which form a finite number of angles with opening $\alpha_j$, and let us denote with $\alpha$ the opening of the largest angle in $\partial\Omega$, see Figure \ref{esempiodominio}. We denote by $V(\partial\Omega)$ the set of vertices $V_j$.

\begin{figure}[htp]
\centering
\includegraphics[width=0.6\textwidth]{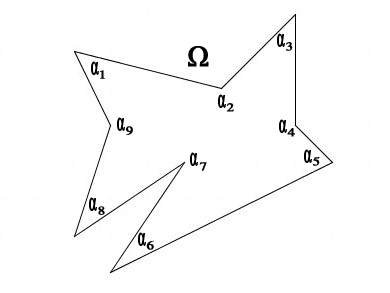}%
\caption{A possible example of domain $\Omega$. In this case $N=9$ and $\alpha=\alpha_7$.}%
\label{esempiodominio}
\end{figure}

\noindent In the following we denote with $L^2(\Omega)$ the Lebesgue space with respect to the Lebesgue measure $dx$ on $\Omega$, and with $L^2(\partial\Omega)$ the Lebesgue space on the boundary with respect to the arc length $d\ell$. By $H^s(\Omega)$, for $s>0$, we denote the standard Sobolev--Slobodetskii spaces. By $\mathcal{C}(\partial\Omega)$ we denote the set of continuous functions on $\partial\Omega$, and by $\mathcal{C}^\infty_0(\R\times\R)$ we denote the set of infinitely differentiable functions with compact support in $\R\times\R$. Moreover, we denote by $B_1(0)$ the unit ball centered in the origin.

\noindent By $H^s(\partial\Omega)$, for $0<s<1$, we denote the Sobolev--Slobodetskii space on $\partial\Omega$ defined by local Lipschitz charts as in~\cite{necas}. For $s\geq 1$, we define the space $H^s(\partial\Omega)$ by using the characterization given by Brezzi-Gilardi in~\cite{bregil}:
\begin{equation}\notag
H^s (\partial\Omega)=\{v\in \mathcal{C} (\partial\Omega)\,:\, v|_{\overset{\circ}{M}}\in H^s (\overset{\circ}{M})\},
\end{equation}
where $M$ denotes a side of $\partial\Omega$ and $\overset{\circ}{M}$ denotes the corresponding open segment (for the general case see Definition 2.27 in \cite{bregil}).

\noindent We fix a counterclockwise orientation on $\partial\Omega$. We denote by $L_j$ the length of the segment $l_j$, for $j=1,\dots,N$, and by $L$ the length of $\partial\Omega$.\\
We choose $V_1$ as the origin and we parametrize $\partial\Omega$ by the arc-length
\begin{equation}\notag
\phi_1(\ell)=\phi_{V_1}(\ell),\quad\phi_1(\ell)\colon[0,L]\to\R^2,
\end{equation}
with $\phi_1$ continuous, injective in $[0,L]$ and such that $\phi_1(0)=\phi_1(L)$.\\
By choosing as origin $V_j$, we define in a similar way
\begin{equation}\notag
\phi_j(\ell)=\phi_{V_j}(\ell),\quad\ell\in[0,L].
\end{equation}
For every $j=1,\dots,N$, we set 
\begin{equation*}
\nabla_\ell^+u(V_j):=\lim_{h\to 0^+}\nabla_\ell u(\phi_j(L_j+h)),\qquad\nabla_\ell^-u(V_j):=\lim_{h\to 0^-}\nabla_\ell u(\phi_j(L_j+h))\,,
\end{equation*}
where $\displaystyle\nabla_\ell=\frac{\partial}{\partial\ell}$,
and define the subspace
\begin{equation*}
\tilde H^2(\partial\Omega)=\{u\in H^2(\partial\Omega)\,:\,u\circ\phi_j\in H^2(0,L_j)\text{ and }\nabla_\ell^+u(V_j)=\nabla_\ell^-u(V_j)\,\,\forall\,j=1,\dots,N\}.
\end{equation*}

\noindent Let $r=r(x)$ be the distance from the set of vertices $V_j$. For $\gamma\in\R$, and $m=0,1,2,\dots$, we denote by $H^m_\gamma (\Omega)$ the Kondrat'ev space of functions for which the norm
\begin{equation}\notag
\|u\|_{H^m_\gamma (\Omega)}=\left(\sum_{|k|\leq m} \int_{\Omega} r^{2(\gamma-m+|k|)} |D^{k} u(x)|^2\,dx\right)^{\frac{1}{2}}
\end{equation}
is finite, see \cite{kond}. For $m=0$, this space evidently coincides with the weighted Lebesgue space $L^2_\gamma (\Omega)$. We also define, for $m\in\N$, the space $H^{m-\frac{1}{2}}_\gamma (\partial\Omega)$ as the trace space of $H^m_\gamma (\Omega)$ equipped with the norm
\begin{equation}\notag
\|u\|_{H^{m-\frac{1}{2}}_\gamma (\partial\Omega)}=\inf_{v=u\,\text{on}\,\partial\Omega}\,\|v\|_{H^m_\gamma (\Omega)}.
\end{equation}

\noindent We now introduce anisotropic Sobolev spaces on the cylinder $Q_T=\Omega\times(0,T)$ and its lateral surface $\partial'' Q_T=\partial\Omega\times(0,T)$. For $l,m\geq 0$ we define
\begin{equation}\notag
H^{l,m}(Q_T)=L^2([0,T];H^l(\Omega))\cap H^m([0,T];L^2(\Omega)),
\end{equation}
and by $H^{l,m}(\partial''Q_T)$ we denote the analogous space on $\partial''Q_T$, taking into account the previous definition of the space $H^s(\partial\Omega)$. 
Similarly, we define
\begin{equation}\notag
\tilde H^{2,1} (\partial''Q_T)=L^2([0,T];\tilde H^2(\partial\Omega))\cap H^1([0,T];L^2(\partial\Omega)).
\end{equation}

\noindent We introduce also the anisotropic Kondrat'ev space $H^{2,1}_\gamma (Q_T)$ of functions for which the following norm is finite (see \cite{kozlov}):
\begin{equation}\notag
\|u\|_{H^{2,1}_\gamma (Q_T)}=\left(\int_{Q_T} r^{2(\gamma-2)}\sum_{|\bar\alpha|\leq 2} r^{2|\bar\alpha|} |\partial_t^{\alpha_0} D_x^{\alpha} u|^2\,dxdt\right)^{\frac{1}{2}},
\end{equation}
where $\bar\alpha=(\alpha_0,\alpha)$ and $|\bar\alpha|=2\alpha_0+|\alpha|$.

\noindent We denote the trace of $u$ on $\partial''Q_T$ with $\gamma_0 u$. Sometimes we will use the same symbol $u$ to denote the function itself and its trace $\gamma_0 u$. The interpretation will be left to the context.\\
We define the composite spaces
$$V^{1,0}(Q_T,\partial''Q_T):=\{u\in H^{1,0} (Q_T)\,:\, \gamma_0 u\in H^{1,0} (\partial''Q_T)\},$$
$$V^{1,1}(Q_T,\partial''Q_T):=\{u\in H^{1,1} (Q_T)\,:\, \gamma_0 u\in H^{1,1} (\partial''Q_T)\},$$
and, for $\sigma\in\R$,
$$V^{2,1}_\sigma(Q_T,\partial''Q_T):=\{u\in H^{1,0} (Q_T)\,:\, r^\sigma D^2 u\in L^2(Q_T),\,r^\sigma u_t\in L^2(Q_T),\,\gamma_0 u\in \tilde H^{2,1}(\partial''Q_T)\}.$$

\noindent We consider the problem formally stated as
\begin{align}
&u_t-\Delta u+au=f & &\text{in $Q_T$,}\label{pbform1}\\[2mm]
&u_t-\Delta_\ell u+\frac{\partial u}{\partial\nu}+bu+\theta_s(u)=g & &\text{on $\partial''Q_T$}\label{pbform2},\\[2mm]
&u(x,0)=0 & &\text{ on $\overline\Omega$}\label{pbform3},
\end{align}
where $f$ and $g$ are given functions, $\displaystyle\Delta_\ell=\frac{\partial^2}{\partial\ell^2}$, $\nu$ is the unit vector of exterior normal, $a\in L^\infty (Q_T)$, $b\in L^\infty (\partial''Q_T)$ and, for $s\in(0,1)$, we set $\theta_s\colon H^{s} (\partial\Omega)\to H^{-s} (\partial\Omega)$ as follows: for every $u,v\in H^{s} (\partial\Omega)$
\begin{equation}\notag
\langle\theta_s (u),v\rangle=\iint_{\partial\Omega\times \partial\Omega}\frac{(u(x)-u(y))(v(x)-v(y))}{|x-y|^{1+2s}}\,d\ell(x)\,d\ell(y),
\end{equation}
where $\langle\cdot,\cdot\rangle$ denotes the duality pairing between $H^{-s} (\partial\Omega)$ and $H^{s} (\partial\Omega)$. We remark that the nonlocal term $\theta_s(\cdot)$ can be regarded as an analogue of the regional fractional Laplace operator $(-\Delta)_{\partial\Omega}^s$ on $\partial\Omega$.\\
We now define the bilinear form $E(u,v)$ as follows:
\begin{equation}
E(u,v)=\int_{\Omega} \nabla u\,\nabla v\,dx+\int_{\partial\Omega} \nabla_\ell u\,\nabla_\ell v\,d\ell+\int_{\Omega} a\,u\,v\,dx+\int_{\partial\Omega} b\,u\,v\,d\ell+\langle\theta_s (u),v\rangle,
\end{equation}
for every $u,v\in V^1(\Omega,\partial\Omega):=\{u\in H^1 (\Omega)\,:\, u|_{\partial\Omega}\in H^1 (\partial\Omega)\}$.\\
\noindent We consider the weak formulation of the problem \eqref{pbform1}-\eqref{pbform3} (cf. \cite{LiMa1}):
\begin{equation}\label{deb}
\begin{split}
&\text{Given $f$ and $g$, find $u\in V^{1,0}(Q_T,\partial'' Q_T)$ such that}\\[2mm]
\displaystyle &-\int_{Q_T} u\,v_t\,dx\,dt-\int_{\partial''Q_T} u\,v_t\,d\ell\,dt+\int_0^T E(u,v)\,dt=\int_{Q_T} f\,v\,dx\,dt+\int_{\partial'' Q_T} g\,v\,d\ell\,dt\\[2mm]
&\text{for every $v\in V^{1,1}(Q_T,\partial'' Q_T)$ such that $v(T,x)=0$.}
\end{split}
\end{equation}

\begin{prop}\label{tilde}
 Let $u$ be a weak solution of \eqref{pbform1}-\eqref{pbform3}. Suppose that $r^\sigma D^2 u\in L^2(Q_T)$, $r^\sigma u_t\in L^2(Q_T)$ and $\gamma_0 u\in H^{2,1}(\partial''Q_T)$. Then $u$ is a strong solution, i.e. equalities \eqref{pbform1}-\eqref{pbform3} are satisfied a.e. in $Q_T$, on $\partial''Q_T$ and in $\Omega$, respectively. Moreover, $\gamma_0 u\in \tilde H^{2,1}(\partial''Q_T)$, i.e. $u\in V^{2,1}_\sigma(Q_T,\partial''Q_T)$.
\end{prop}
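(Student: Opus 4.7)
The plan is to use the additional regularity hypotheses to perform classical integrations by parts in the weak formulation \eqref{deb}, extracting each of \eqref{pbform1}--\eqref{pbform3} pointwise and simultaneously the continuity of the tangential derivative at every vertex, which upgrades $\gamma_0 u$ from $H^{2,1}$ to $\tilde H^{2,1}$.

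First I would take test functions $v\in\mathcal{C}^\infty_0(Q_T)$, which annihilate every boundary integral and every nonlocal term in \eqref{deb}. Because $r^\sigma u_t$ and $r^\sigma D^2 u$ lie in $L^2(Q_T)$, both $u_t$ and $\Delta u$ are locally $L^2$ away from the vertex set, and the usual integration by parts in $t$ and $x$ yields $\int_{Q_T}(u_t-\Delta u+au-f)v\,dx\,dt=0$ for all such $v$, hence \eqref{pbform1} a.e.\ in $Q_T$. Next I would subtract this identity (tested against a general $v\in V^{1,1}(Q_T,\partial''Q_T)$ with $v(T,\cdot)=0$) from \eqref{deb}, restricting first to $v$ whose trace vanishes at every vertex. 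Integration by parts in the bulk produces the normal trace $\int_0^T\!\int_{\partial\Omega}\frac{\partial u}{\partial\nu}v\,d\ell\,dt$, well defined away from $V(\partial\Omega)$ via the weighted control on $D^2 u$; integration by parts on each segment $l_j$, using $\gamma_0 u|_{l_j}\in H^2(l_j)$ for a.e.\ $t$, produces $-\int\Delta_\ell u\,v\,d\ell$ plus vertex boundary contributions
\[
\sum_{j=1}^N \int_0^T\bigl(\nabla_\ell^+ u(V_j,t)-\nabla_\ell^- u(V_j,t)\bigr)v(V_j,t)\,dt,
\]
which vanish by the present choice of $v$. Finally, since $u(\cdot,t)\in H^2(\partial\Omega)\subset H^s(\partial\Omega)$ for a.e.\ $t$, the pairing $\langle\theta_s(u),v\rangle$ collapses to $\int_{\partial\Omega}\theta_s(u)v\,d\ell$ with $\theta_s(u)\in L^2(\partial\Omega)$. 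Density of such $v$ in $L^2(\partial''Q_T)$ then delivers \eqref{pbform2} a.e.

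With \eqref{pbform1}--\eqref{pbform2} in hand, testing \eqref{deb} against an arbitrary $v\in V^{1,1}(Q_T,\partial''Q_T)$ with $v(T,\cdot)=0$ collapses everything except the vertex sum above, which must therefore vanish for all admissible $v$. For each fixed vertex $V_j$ and each $\psi\in\mathcal{C}^\infty_0((0,T))$, the extension result proved in Appendix \ref{appendice} supplies a $v\in V^{1,1}(Q_T,\partial''Q_T)$ with $v(V_k,t)=\delta_{jk}\psi(t)$; inserting this test function forces $\nabla_\ell^+ u(V_j)=\nabla_\ell^- u(V_j)$ for every $j$, i.e.\ $\gamma_0 u\in\tilde H^{2,1}(\partial''Q_T)$ and consequently $u\in V^{2,1}_\sigma(Q_T,\partial''Q_T)$. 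The initial condition \eqref{pbform3} is recovered in the standard way, by testing with $v$ that does not vanish at $t=0$ and integrating $u_t$ by parts in time, relying on $u\in\mathcal{C}([0,T];L^2(\Omega))$, which follows from the assumed $L^2$-regularity of $u_t$.

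The two delicate points are the boundary integration by parts near the corners and the construction of test functions with prescribed vertex values. For the first, the weighted bound $r^\sigma D^2 u\in L^2$ does not immediately give an $L^2$ normal trace up to the vertices, so I would first integrate on $\partial\Omega\setminus\bigcup_j B_\varepsilon(V_j)$, then let $\varepsilon\to 0$, using the weighted control to show that the corner contributions vanish in the limit. For the second, the required admissibility is exactly what the lateral-extension theorem in the appendix provides, and this is precisely the step on which the Munchhausen-type closure of the argument depends.
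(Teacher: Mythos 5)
Your proposal is correct and follows precisely the route the paper indicates but does not write out: integration by parts in the weak formulation \eqref{deb} plus the fundamental lemma of the calculus of variations, with the vertex boundary terms from the tangential integration by parts yielding $\nabla_\ell^+u(V_j)=\nabla_\ell^-u(V_j)$ and hence $\gamma_0 u\in\tilde H^{2,1}(\partial''Q_T)$. The only cosmetic remark is that the test functions with prescribed vertex values can be produced directly as $v(x,t)=\psi(t)\eta(x)$ with $\eta$ a smooth cutoff equal to $1$ near $V_j$ and vanishing near the other vertices, so invoking the extension theorem of Appendix \ref{appendice} for that step is unnecessary (that theorem is needed only later, in the a priori estimate).
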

This statement follows from integration by parts and the fundamental lemma of calculus of variations.

\medskip

In what follows we denote by $C$ all positive constants. The dependence of constants on some parameters is given in parentheses. We do not indicate the dependence of $C$ on the geometry of $\Omega$.

\section{A priori estimates}\label{sec2}
\setcounter{equation}{0}

\begin{teo}\label{aprioriest} Let $u\in V^{2,1}_\sigma(Q_T,\partial''Q_T)$ be a solution of problem \eqref{pbform1}-\eqref{pbform3}. Then there exists a positive constant $C=C(\sigma)$ such that 
\begin{equation}\label{stima5}
\begin{split}
&\|u\|^2_{H^{1,0}(Q_T)}+\|r^\sigma D^2 u\|^2_{L^2(Q_T)}+\|r^\sigma u_t\|^2_{L^2(Q_T)}+\|u\|^2_{H^{2,1}(\partial''Q_T)}\\[3mm]
&\leq C(\sigma)\left(\|u\|^2_{L^2(\partial''Q_T)}+\|r^\sigma f\|^2_{L^2(Q_T)}+\|g\|^2_{L^2(\partial''Q_T)}\right),
\end{split}
\end{equation}
provided 
\begin{equation}\label{boundsigma}
1-\frac{\pi}{\alpha}<\sigma<\frac{1}{2},\qquad \sigma\geq-\frac{1}{2}
\end{equation}
(recall that $\alpha$ is the opening of the largest angle in $\partial\Omega$).
\end{teo}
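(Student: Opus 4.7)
The plan is to combine three ingredients: a basic energy estimate, Kondrat'ev-type parabolic regularity in the polygon $\Omega$, and one-dimensional parabolic regularity on the broken curve $\partial\Omega$. These three inequalities couple circularly, and the so-called Munchhausen trick (cf.\ \cite{nostronazarov,ApNazPalSof}) closes them into \eqref{stima5}.

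First I would test the weak formulation \eqref{deb} with a time-truncated version of $u$ itself. The nonlocal form $\langle\theta_s u,u\rangle\ge 0$ is dropped, the interior source is split as $|\iint fu|\le\|r^\sigma f\|_{L^2(Q_T)}\,\|r^{-\sigma}u\|_{L^2(Q_T)}$ (the second factor controlled by the Hardy inequality since $\sigma<1$), and the boundary source is bounded by Cauchy--Schwarz. This yields
\[
\|u\|^2_{H^{1,0}(Q_T)}+\|u\|^2_{H^{1,0}(\partial''Q_T)}\le C\bigl(\|r^\sigma f\|^2_{L^2(Q_T)}+\|g\|^2_{L^2(\partial''Q_T)}+\|u\|^2_{L^2(\partial''Q_T)}\bigr).
\]

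Next I would apply Kondrat'ev--Kozlov theory \cite{kond,kozlov} to the interior problem $u_t-\Delta u=f-au$ with Dirichlet datum $\gamma_0u$. The lower bound $1-\pi/\alpha<\sigma$ in \eqref{boundsigma} is exactly the condition that $\sigma$ avoid the corner spectrum, yielding
\[
\|r^\sigma D^2u\|^2_{L^2(Q_T)}+\|r^\sigma u_t\|^2_{L^2(Q_T)}\le C\bigl(\|r^\sigma f\|^2_{L^2(Q_T)}+\|u\|^2_{H^{1,0}(Q_T)}+\|\gamma_0u\|^2_{\mathrm{tr}(\sigma)}\bigr),
\]
where $\|\cdot\|_{\mathrm{tr}(\sigma)}$ denotes the trace norm of $H^{2,1}_\sigma(Q_T)$ on $\partial''Q_T$. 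The upper bound $\sigma<1/2$ ensures that this trace space embeds into the unweighted $H^{2,1}(\partial''Q_T)$, while $\sigma\ge-1/2$ guarantees that the normal derivative $\partial u/\partial\nu$ has an $L^2(\partial''Q_T)$-trace. Rewriting the Venttsel' equation \eqref{pbform2} on each smooth piece $l_j$ as $u_t-\Delta_\ell u=g-bu-\theta_s(u)-\partial u/\partial\nu$, in which $\theta_s(u)\in L^2$ (operator of order $2s<2$ applied to a function already in $L^2([0,T];H^1(\partial\Omega))$ from the energy step) and $\partial u/\partial\nu\in L^2(\partial''Q_T)$ by the weighted trace theorem applied to the previous bound, one-dimensional parabolic regularity on each segment combined with the extension theorem of Appendix \ref{appendice} produces
\[
\|u\|^2_{H^{2,1}(\partial''Q_T)}\le C\bigl(\|g\|^2_{L^2(\partial''Q_T)}+\|u\|^2_{L^2(\partial''Q_T)}+\|u\|^2_{H^{1,0}(Q_T)}+\|r^\sigma D^2u\|^2_{L^2(Q_T)}+\|r^\sigma u_t\|^2_{L^2(Q_T)}\bigr).
\]

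Finally, the Munchhausen trick closes the loop: feeding $\|\gamma_0u\|_{\mathrm{tr}(\sigma)}\lesssim\|u\|_{H^{2,1}(\partial''Q_T)}$ (valid for $\sigma<1/2$) into the interior estimate and then the resulting bound on $\|r^\sigma D^2 u\|+\|r^\sigma u_t\|$ back into the boundary estimate, a careful quantitative version of the weighted trace inequality yields cross-terms with a small multiplicative factor that are absorbed into the left-hand side, producing \eqref{stima5}. The principal obstacle is the third step: both conditions in \eqref{boundsigma} must conspire so that $\partial u/\partial\nu$ is $L^2$ on $\partial''Q_T$ with a bound matching the weighted interior norms, and the piecewise $H^{2,1}$-regularity on the segments $l_j$ must be glued into a bona fide function in $H^{2,1}(\partial''Q_T)$ --- this is precisely the role of the extension theorem from the appendix.
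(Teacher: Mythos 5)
Your overall architecture --- three coupled estimates (energy, weighted interior regularity, boundary regularity) closed by absorption \`a la Munchhausen --- is the same as the paper's, but two of your individual steps have genuine gaps.

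First, the treatment of $\theta_s(u)$. You claim $\theta_s(u)\in L^2(\partial''Q_T)$ because it is ``an operator of order $2s<2$ applied to a function already in $L^2([0,T];H^1(\partial\Omega))$ from the energy step.'' Since $\theta_s$ lowers smoothness by $2s$, this puts $\theta_s(u(\cdot,t))$ only in $H^{1-2s}(\partial\Omega)$, which is \emph{not} contained in $L^2(\partial\Omega)$ when $s>\frac12$; the theorem is claimed for all $s\in(0,1)$. The paper instead uses the full regularity $u(\cdot,t)\in H^2(\partial\Omega)$ to get $\theta_s(u(\cdot,t))\in H^{2-2s}(\partial\Omega)\subset L^2(\partial\Omega)$ --- but then the bound involves $\|u\|_{H^2(\partial\Omega)}$, i.e.\ exactly the quantity being estimated, so a two-step Ehrling/compactness argument is needed to make it appear with an arbitrarily small factor $\varepsilon$ plus $C(\varepsilon)\|u\|_{L^2}$. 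You need one of these two mechanisms (either $s\le\frac12$ with your soft bound, or the interpolation argument for general $s$); as written your step fails for $s>\frac12$.

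Second, the normal derivative. The heart of the paper's proof is showing that $\|\partial u/\partial\nu\|^2_{L^2(\partial''Q_T)}$ is bounded by $\varepsilon$ times the weighted interior norms plus $C(\varepsilon)\|u\|^2_{L^2(\partial''Q_T)}$; this is done by splitting the trace operator into a part supported at distance $\ge\delta/2$ from the vertices (compact from $H^{2,1}_\sigma(Q_T)$ to $L^2(\partial''Q_T)$, hence $\varepsilon$-absorbable) and a part near the vertices carrying the factor $\delta^{\frac12-\sigma}$, which is where $\sigma<\frac12$ is actually used. You assert that ``a careful quantitative version of the weighted trace inequality yields cross-terms with a small multiplicative factor'' but give no mechanism for the smallness, and you misattribute the roles of the two constraints in \eqref{boundsigma}: $\sigma\ge-\frac12$ is needed so that $\Delta U$ and $U_t$ (for the $H^{\frac52,\frac54}$ extension $U$ of the boundary data, whose second derivatives lie in $L^2_{-1/2}$ by the fractional Hardy inequality) land in $L^2_\sigma(Q_T)$, while $\sigma<\frac12$ produces the smallness of the near-vertex trace term. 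Relatedly, the extension theorem of the appendix is not used to ``glue'' piecewise regularity on $\partial\Omega$ (that is built into the Brezzi--Gilardi definition of $H^2(\partial\Omega)$); it is used to reduce the interior problem to one with homogeneous Dirichlet data so that the Kozlov--Maz'ya weighted estimate applies. Without these two mechanisms the circular system of inequalities cannot be closed.
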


\begin{proof}
We use the \emph{Munchhausen trick}. 
We move the terms $\frac{\partial u}{\partial\nu}$, $bu$ and $\theta_s(u)$ in \eqref{pbform2} into the right-hand side and consider them as known functions. Then we easily have 
\begin{equation}\label{stima1}
\|u\|^{2}_{H^{2,1}(\partial''Q_T)}\leq C\left(\left\|\frac{\partial u}{\partial\nu}\right\|^2_{L^2(\partial''Q_T)} + \|u\|^2_{L^2(\partial''Q_T)}+\|\theta_s (u)\|^2_{L^2(\partial''Q_T)}+\|g\|^2_{L^2(\partial''Q_T)}\right).
\end{equation}
We proceed in several steps.

{\bf 1)} First we estimate $\|\theta_s(u)\|^2_{L^2(\partial''Q_T)}$. Since $u\in\tilde H^{2,1}(\partial''Q_T)$, in particular $u(\cdot,t)\in\tilde H^2(\partial\Omega)$ for a.e. $t$. Hence it is sufficient to consider the local behavior of $u$ near the vertices. Without loss of generality, we can assume that the vertex is located at the origin. We introduce a smooth cutoff function $\eta$ and rectify $\partial\Omega$ near the origin. From our hypothesis on $u$, we have that for a.e. $t\in[0,T]$ $\theta_s(u(\cdot,t))\in H^{2-2s}(\partial\Omega)$ and
\begin{equation}\notag
\|\theta_s(u(\cdot,t))\|^2_{H^{2-2s}(\partial\Omega)}\leq C(s)\|u(\cdot,t)\|^2_{H^2(\partial\Omega)}.
\end{equation}
From the compact embedding of $H^{2-2s} (\partial\Omega)$ in $L^2(\partial\Omega)$ we deduce that for every $\varepsilon>0$ there exists a constant $C(\varepsilon)$ such that
\begin{equation}\notag
\|\theta_s(u(\cdot,t))\|^2_{L^2(\partial\Omega)}\leq\varepsilon\|\theta_s(u(\cdot,t))\|^2_{H^{2-2s} (\partial\Omega)}+C(\varepsilon)\|\theta_s(u(\cdot,t))\|^2_{H^{-s}(\partial\Omega)},
\end{equation}
see Lemma 6.1, Chapter 2 in~\cite{necas}. Similarly, we have
\begin{equation}\notag
\|\theta_s(u(\cdot,t))\|^2_{H^{-s}(\partial\Omega)}\leq C\|u(\cdot,t)\|^2_{H^s(\partial\Omega)}\leq\varepsilon\|u(\cdot,t)\|^2_{H^2(\partial\Omega)}+C(\varepsilon)\|u\|^2_{L^2(\partial\Omega)}.
\end{equation}
Putting together these estimates, we get
\begin{equation}\label{spaziale}
\|\theta_s(u(\cdot,t))\|^2_{L^2(\partial\Omega)}\leq\varepsilon\|u(\cdot,t)\|^2_{H^{2} (\partial\Omega)}+C(\varepsilon)\|u(\cdot,t)\|^2_{L^2(\partial\Omega)}.
\end{equation}
By integrating \eqref{spaziale} with respect to $t\in[0,T]$ we obtain
\begin{equation}\label{spaztemp}
\begin{split}
\|\theta_s(u)\|^2_{L^2(\partial''Q_T)}=\int_0^T\|\theta_s(u(\cdot,t))\|^2_{L^2(\partial\Omega)}\,dt\leq\int_0^T\left(\varepsilon\|u(\cdot,t)\|^2_{H^{2} (\partial\Omega)}+C(\varepsilon)\|u(\cdot,t)\|^2_{L^2(\partial\Omega)}\right)\,dt\\[2mm]
=\varepsilon\|u\|^2_{L^2([0,T];H^2(\partial\Omega))}+C(\varepsilon)\|u\|^2_{L^2(\partial''Q_T)}\leq\varepsilon\|u\|^2_{H^{2,1}(\partial''Q_T)}+C(\varepsilon)\|u\|^2_{L^2(\partial''Q_T)}.
\end{split}
\end{equation}

\noindent Therefore we obtain the following estimate using \eqref{stima1}:
\begin{equation}\notag
\|u\|^2_{H^{2,1} (\partial''Q_T)} \leq C\left(\left\|\frac{\partial u}{\partial\nu}\right\|^2_{L^2(\partial''Q_T)} +\|g\|^2_{L^2(\partial''Q_T)}+\varepsilon\|u\|^2_{H^{2,1}(\partial''Q_T)}+C(\varepsilon)\|u\|^2_{L^2(\partial''Q_T)}\right).
\end{equation}
By choosing $\varepsilon$ sufficiently small we obtain
\begin{equation}\label{intermedia}
\|u\|^2_{H^{2,1}(\partial''Q_T)}\leq C\left(\left\|\frac{\partial u}{\partial\nu}\right\|^2_{L^2(\partial''Q_T)} + \|u\|^2_{L^2(\partial''Q_T)}+\|g\|^2_{L^2(\partial''Q_T)}\right).
\end{equation}

{\bf 2)} By Theorem \ref{ext}, there is an extension $U\in H^{\frac{5}{2},\frac{5}{4}}(\R^2\times\R)$ such that $(U-u)|_{\partial''Q_T}=0$, $U|_{t=0}=0$, and the following estimate holds:
\begin{equation}\label{stimaU}
\|U\|_{H^{\frac{5}{2},\frac{5}{4}}(\R^2\times\R)}\leq C\|u\|_{H^{2,1}(\partial''Q_T)}.
\end{equation}

Without loss of generality we can suppose that the support of $U$ is bounded.

We claim that $D^2 U$ and $U_t$ belong to the weighted Lebesgue space $L^2_{-\frac{1}{2}}(\R^2\times\R)$. Indeed, by localizing we need to check it only in a neighborhood of a vertex $V_j$ located at the origin.

The inclusion of $U\in H^{\frac{5}{2},\frac{5}{4}}(\R^2\times\R)$ evidently implies $D^2U\in L^2(\R;H^\frac{1}{2}(\R^2))$. Furthermore, the Young inequality
\begin{equation*}
|\eta||\xi|^\frac{1}{2}\leq\frac{|\eta|^\frac{5}{4}}{5/4}+\frac{|\xi|^\frac{5}{2}}{5}
\end{equation*}
shows that $U_t\in L^2(\R;H^\frac{1}{2}(\R^2))$, and \eqref{stimaU} gives
\begin{equation}\label{stimaU2}
\|D^2 U\|_{L^2(\R;H^\frac{1}{2}(\R^2))}+\|U_t\|_{L^2(\R;H^\frac{1}{2}(\R^2))}\leq C\|u\|_{H^{2,1}(\partial''Q_T)}.
\end{equation}

By the fractional Hardy inequality, see \cite[Theorem 3.2 and Remark 3.2]{ilin}, for a.e. $t$ we have 
\begin{equation}\label{stimaU3}
\int_{\R^2}\frac{|D^2U(\cdot,t)|^2}{|x|}\,dx\leq C\|D^2 U(\cdot,t)\|^2_{H^\frac{1}{2}(\R^2)},
\end{equation}
and a similar inequality holds for $U_t$. We integrate these estimates with respect to $t$, and the claim follows.
\medskip

{\bf 3)} We now consider the function $v=u-U$. It solves the Dirichlet problem
\begin{equation}\label{problemav}
v_t-\Delta v=f-U_t+\Delta U\in L^2_\sigma (Q_T);\qquad v|_{\partial''Q_T}=0;\qquad v|_{t=0}=0. 
\end{equation}
(here we used the last restriction in \eqref{boundsigma}). From Theorem 3 in~\cite{kozlovmazya} (with $l=0$) it follows that $v\in H^{2,1}_{\sigma} (Q_T)$ if $\displaystyle|\sigma-1|<\frac{\pi}{\alpha}$ (we recall that $\alpha$ is the opening of the largest angle in $\partial\Omega$).

From \eqref{stimaU2} and \eqref{stimaU3}, this implies
\begin{equation}\label{stimaU+v}
\|u\|^2_{H^{1,0}(Q_T)}+\|r^\sigma D^2 u\|^2_{L^2(Q_T)}+\|r^\sigma u_t\|^2_{L^2(Q_T)}\leq C(\sigma)(\|r^\sigma f\|^2_{L^2(Q_T)}+\|u\|^2_{H^{2,1}(\partial''Q_T)})
\end{equation}
(to estimate the first term, we also take into account that \eqref{boundsigma} implies $\sigma\leq 1$).

\medskip

{\bf 4)} We are now in the position to estimate $\left\|\frac{\partial u}{\partial\nu}\right\|^2_{L^2(\partial''Q_T)}$. By rescaling, we deduce that $\nabla u\in L^2_{\sigma-\frac{1}{2}} (\partial''Q_T)$ and
\begin{equation}\label{stimaderivata}
\|\nabla u\|^2_{L^2_{\sigma-\frac{1}{2}} (\partial''Q_T)}\leq C\left(\|u\|^2_{H^{1,0}(Q_T)}+\|r^\sigma D^2 u\|^2_{L^2(Q_T)}\right).
\end{equation}

Following \cite{nostronazarov}, we define a cutoff function $\eta_\delta$ such that
\begin{equation}\notag
\eta_\delta(r)=1\quad\text{for}\quad r>\delta,\qquad \eta_\delta (r)=0\quad\text{for}\quad r<\delta/2
\end{equation}
and we introduce the following trace operator:
\begin{equation}\notag
u\longrightarrow\frac{\partial u}{\partial\nu}\Big|_{\partial''Q_T}=\eta_\delta\frac{\partial u}{\partial\nu}\Big|_{\partial''Q_T}+(1-\eta_\delta)\frac{\partial u}{\partial\nu}\Big|_{\partial''Q_T}=:\mathcal{K}_1(\delta)u+\mathcal{K}_2(\delta)u.
\end{equation}
The operator $\mathcal{K}_1(\delta)\colon H^{2,1}_\sigma(Q_T)\to L^2(\partial''Q_T)$ is evidently compact. Using \eqref{stimaU+v}, we obtain for arbitrary $\varepsilon>0$
\begin{equation}\notag
\|\mathcal{K}_1(\delta)u\|^2_{L^2(\partial''Q_T)}\leq\frac{\varepsilon}{2} (\|r^\sigma f\|^2_{L^2(Q_T)}+\|u\|^2_{H^{2,1}(\partial''Q_T)})+C(\varepsilon,\sigma,\delta)\|u\|^2_{L^2(\partial''Q_T)}.
\end{equation}

From \eqref{stimaU+v} and \eqref{stimaderivata} we deduce
\begin{equation}\notag
\|\mathcal{K}_2(\delta)u\|^2_{L^2(\partial''Q_T)}\leq C(\sigma)\delta^{\frac{1}{2}-\sigma}(\|r^\sigma f\|^2_{L^2(Q_T)}+\|u\|^2_{H^{2,1}(\partial''Q_T)}).
\end{equation}
By choosing $\delta(\sigma,\varepsilon)$ sufficiently small we get
\begin{equation}\notag
\left\|\frac{\partial u}{\partial\nu}\right\|^2_{L^2(\partial''Q_T)}\leq\varepsilon(\|r^\sigma f\|^2_{L^2(Q_T)}+\|u\|^2_{H^{2,1}(\partial''Q_T)})+C(\varepsilon,\sigma)\|u\|^2_{L^2(\partial''Q_T)}.
\end{equation}

\noindent Substituting the above inequality into \eqref{intermedia} we have
\begin{equation}\notag
\|u\|^2_{H^{2,1}(\partial''Q_T)}\leq C\left(\varepsilon(\|r^\sigma f\|^2_{L^2(Q_T)}+\|u\|^2_{H^{2,1}(\partial''Q_T)})+C(\varepsilon,\sigma)\|u\|^2_{L^2(\partial''Q_T)}+\|g\|^2_{L^2(\partial''Q_T)}\right).
\end{equation}
By choosing $\varepsilon$ sufficiently small we obtain
\begin{equation*}\label{intermedia2}
\|u\|^2_{H^{2,1}(\partial''Q_T)}\leq C\left(\|r^\sigma f\|^2_{L^2(Q_T)}+C(\sigma)\|u\|^2_{L^2(\partial''Q_T)}+\|g\|^2_{L^2(\partial''Q_T)}\right).
\end{equation*}
Taking into account \eqref{stimaU+v}, we get \eqref{stima5}.
\end{proof}

\section{Strong solvability of the Venttsel' problem}\label{sec3}
\setcounter{equation}{0}

\noindent We begin with the existence and uniqueness of the weak solution. By standard Galerkin methods (cf. \cite{ladyz}), the following result holds.

\begin{lemma}\label{weaksol} Let $f\in L^2(\Omega)$, $g\in L^2(\partial\Omega)$, $a\in L^\infty (Q_T)$ and $b\in L^\infty (\partial''Q_T)$. Then there exists a unique weak solution $u$ in $V^{1,0}(Q_T,\partial''Q_T)$ of problem \eqref{deb}. Moreover
\begin{equation}\label{stimacont}
\|u\|_{V^{1,0}(Q_T,\partial''Q_T)}\leq C(\|f\|_{L^2(Q_T)}+\|g\|_{L^2(\partial''Q_T)}),
\end{equation}
where $C$ depends on $T$, $a$ and $b$.
\end{lemma}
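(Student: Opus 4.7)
The plan is to apply a standard Faedo--Galerkin scheme that respects the coupling between the bulk and boundary dynamics encoded in $V^1(\Omega,\partial\Omega)$. First I would take as pivot Hilbert space $\mathcal H:=L^2(\Omega)\oplus L^2(\partial\Omega)$ equipped with $\langle u,v\rangle_{\mathcal H}:=\int_\Omega uv\,dx+\int_{\partial\Omega}uv\,d\ell$, and introduce the auxiliary symmetric coercive bilinear form
\begin{equation*}
\tilde E(u,v):=\int_\Omega(\nabla u\,\nabla v+uv)\,dx+\int_{\partial\Omega}(\nabla_\ell u\,\nabla_\ell v+uv)\,d\ell
\end{equation*}
on $V^1(\Omega,\partial\Omega)$. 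Spectral theory then produces a Hilbert basis $\{e_k\}_{k\in\N}$ of $\mathcal H$ consisting of eigenfunctions of $\tilde E$, which is simultaneously orthogonal in $V^1(\Omega,\partial\Omega)$. I would seek approximate solutions $u_N(x,t):=\sum_{k=1}^N c_k^N(t)e_k(x)$ determined by
\begin{equation*}
\langle(u_N)_t,e_k\rangle_{\mathcal H}+E(u_N,e_k)=\int_\Omega f\,e_k\,dx+\int_{\partial\Omega}g\,e_k\,d\ell,\quad c_k^N(0)=0,\quad k=1,\dots,N,
\end{equation*}
which is a linear Carath\'eodory ODE system in $(c_1^N,\dots,c_N^N)$ and is therefore uniquely solvable on $[0,T]$.

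The core step is the uniform a priori energy estimate. Testing against $v=u_N$ and exploiting the sign property $\langle\theta_s(u_N),u_N\rangle\geq 0$, which is immediate from the quadratic form defining $\theta_s$, together with $a\in L^\infty(Q_T)$ and $b\in L^\infty(\partial''Q_T)$, I obtain
\begin{equation*}
\frac{1}{2}\frac{d}{dt}\|u_N(\cdot,t)\|^2_{\mathcal H}+\|\nabla u_N\|^2_{L^2(\Omega)}+\|\nabla_\ell u_N\|^2_{L^2(\partial\Omega)}\leq C\bigl(\|u_N(\cdot,t)\|^2_{\mathcal H}+\|f(\cdot,t)\|^2_{L^2(\Omega)}+\|g(\cdot,t)\|^2_{L^2(\partial\Omega)}\bigr).
\end{equation*}
A Gronwall argument, integrated in $t\in[0,T]$, then yields \eqref{stimacont} uniformly in $N$.

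Weak compactness produces a subsequence converging weakly in $V^{1,0}(Q_T,\partial''Q_T)$ to a limit $u$. Passing to the limit in the Galerkin equations tested against finite linear combinations of the $e_k$ multiplied by scalar functions of $t$ vanishing at $T$, and invoking density in the admissible test space, identifies $u$ as a weak solution obeying \eqref{stimacont}; the attainment of the zero initial datum in $\mathcal H$ follows by integrating the equation against test functions that do not vanish at $t=0$. Uniqueness is obtained by applying the same energy estimate to the difference of two weak solutions with $f=g=0$. The only non-routine ingredient is handling the nonlocal contribution along the approximation: this is controlled via the continuity $\theta_s\colon H^s(\partial\Omega)\to H^{-s}(\partial\Omega)$ combined with the uniform boundary $H^1$ control coming from the estimate above, which lets one pass to the limit in $\int_0^T\langle\theta_s(u_N),e_k\rangle\,dt$. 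I expect the main technical delicacy to lie precisely in the correct realization of the initial condition simultaneously in $L^2(\Omega)$ and $L^2(\partial\Omega)$, which is exactly the reason for adopting $\mathcal H$ as the pivot space from the outset.
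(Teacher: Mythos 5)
Your proposal is correct and coincides with the paper's approach: the paper proves Lemma \ref{weaksol} only by invoking ``standard Galerkin methods (cf.\ \cite{ladyz})'', and your Faedo--Galerkin scheme on the pivot space $L^2(\Omega)\oplus L^2(\partial\Omega)$, using the nonnegativity of $\langle\theta_s(u_N),u_N\rangle$ and Gronwall, is precisely that standard argument. The one step to execute with care is uniqueness, since a $V^{1,0}$ solution cannot be tested directly against itself; one uses Steklov averages or the test function $v(x,t)=-\int_t^{T}u(x,\tau)\,d\tau$ as in \cite{ladyz}.
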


\noindent We finally prove the desired regularity for the weak solution of the parabolic nonlocal Venttsel' problem.

\begin{teo} Let $\sigma$ be subject to condition \eqref{boundsigma}. Suppose that $g$, $a$ and $b$ are as in Lemma \ref{weaksol} and that $f\in L^2_\sigma (\Omega)$. Then the problem \eqref{pbform1}-\eqref{pbform3} has a unique solution $u\in V^{2,1}_\sigma (Q_T,\partial''Q_T)$, and the following inequality holds:
\begin{equation}\label{regolarita}
\begin{split}
&\|u\|^2_{H^{1,0}(Q_T)}+\|r^\sigma D^2 u\|^2_{L^2(Q_T)}+\|r^\sigma u_t\|^2_{L^2(Q_T)}+\|u\|^2_{H^{2,1}(\partial''Q_T)}\\[3mm]
&\leq C\left(\|r^\sigma f\|^2_{L^2(Q_T)}+\|g\|^2_{L^2(\partial''Q_T)}\right),
\end{split}
\end{equation}
where $C$ depends on $\sigma$, $T$, $a$ and $b$.
\end{teo}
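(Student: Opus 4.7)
The plan is to combine the a priori estimate of Theorem \ref{aprioriest} with the weak solvability of Lemma \ref{weaksol} through an approximation argument. \emph{Uniqueness}: any two elements of $V^{2,1}_\sigma(Q_T,\partial''Q_T)$ satisfying \eqref{pbform1}-\eqref{pbform3} are in particular weak solutions in the sense of \eqref{deb}, so their difference vanishes by Lemma \ref{weaksol}.

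For \emph{existence}, I would choose sequences $f_n\in C^\infty(\overline{Q_T})$ compactly supported away from $V(\partial\Omega)\times[0,T]$ and $g_n\in C^\infty(\partial''Q_T)$, so that $f_n\to f$ in $L^2_\sigma(Q_T)$ and $g_n\to g$ in $L^2(\partial''Q_T)$. Lemma \ref{weaksol} yields unique weak solutions $u_n\in V^{1,0}(Q_T,\partial''Q_T)$. A close inspection of the Galerkin proof, combined with the weighted Hardy-type inequality $\|v\|_{L^2_{-\sigma}(\Omega)}\leq C\|v\|_{H^1(\Omega)}$ (valid for $\sigma<1$ in two dimensions), refines \eqref{stimacont} to the weighted form
\[
\|u_n\|_{V^{1,0}(Q_T,\partial''Q_T)}\leq C\bigl(\|r^\sigma f_n\|_{L^2(Q_T)}+\|g_n\|_{L^2(\partial''Q_T)}\bigr),
\]
which will later absorb the boundary norm appearing in \eqref{stima5}.

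The key technical point is to upgrade each $u_n$ to $V^{2,1}_\sigma(Q_T,\partial''Q_T)$. Since $f_n$ vanishes in a fixed neighbourhood of each vertex, the Munchhausen trick used in Theorem \ref{aprioriest} is available: one views $\partial u_n/\partial\nu+b\,u_n+\theta_s(u_n)-g_n$ as a lower-order boundary datum, reduces the interior problem to a parabolic Dirichlet problem in the angular domain, invokes the Kondrat'ev-type regularity of Kozlov--Mazya (the Theorem~3 from \cite{kozlovmazya} already used in Step~3 of the proof of Theorem \ref{aprioriest}), and treats $\theta_s$ as a compact lower-order perturbation as in Step~1 there. A preliminary further smoothing of the data, or of the nonlocal kernel, together with the closure argument provided by the a priori estimate \eqref{stima5}, then gives $u_n\in V^{2,1}_\sigma$. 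I expect this bootstrap --- producing $V^{2,1}_\sigma$-regularity in an angular domain for the \emph{nonlocal} Venttsel' problem with smooth data --- to be the main obstacle.

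Once $u_n\in V^{2,1}_\sigma$ is established, applying Theorem \ref{aprioriest} to $u_n-u_m$ and bounding $\|u_n-u_m\|_{L^2(\partial''Q_T)}$ by the weighted $V^{1,0}$-estimate above shows that $\{u_n\}$ is Cauchy in $V^{2,1}_\sigma$; its limit $u$ is a strong solution of \eqref{pbform1}-\eqref{pbform3} by Proposition \ref{tilde}. Finally, applying Theorem \ref{aprioriest} to $u$ itself and absorbing $\|u\|^2_{L^2(\partial''Q_T)}$ through the same weighted $V^{1,0}$-bound delivers the estimate \eqref{regolarita}.
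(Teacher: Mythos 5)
Your overall architecture (uniqueness from Lemma \ref{weaksol}, then existence by approximation plus the a priori bound) is reasonable in outline, but it has a genuine gap exactly at the point you yourself flag as ``the main obstacle'': you never actually establish that the weak solution $u_n$ corresponding to smooth data lies in $V^{2,1}_\sigma(Q_T,\partial''Q_T)$. This cannot be extracted from Theorem \ref{aprioriest}, because that theorem is an \emph{a priori} estimate: its hypothesis is that the solution already belongs to $V^{2,1}_\sigma$, so invoking it (or ``the closure argument provided by \eqref{stima5}'') to upgrade a weak solution is circular. Smoothing the data or the kernel does not break the circle either --- smooth data do not by themselves yield second-order regularity up to the corners for the fully coupled nonlocal Venttsel' problem, and the Munchhausen trick only converts regularity into an estimate, not vice versa. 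Without this step, the Cauchy argument for $u_n-u_m$ in $V^{2,1}_\sigma$ has nothing to start from.

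The paper closes this gap with a different device: a parameter family $\elle_\mu$, $\mu\in[0,1]$, acting from $V^{2,1}_\sigma(Q_T,\partial''Q_T)$ to $L^2_\sigma(Q_T)\times L^2(\partial''Q_T)$, where $\mu$ multiplies the coupling terms $\frac{\partial u}{\partial\nu}+bu+\theta_s(u)$ and the zero-order term $au$. At $\mu=0$ the interior and boundary equations decouple: one first solves the standalone one-dimensional heat equation on $\partial''Q_T$ (which delivers $\tilde H^{2,1}(\partial''Q_T)$ regularity directly), and then solves the interior Dirichlet problem with that trace, obtaining the weighted regularity from Kozlov--Maz'ya. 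This gives genuine invertibility of $\elle_0$ \emph{together with} the target regularity --- the existence input your argument lacks. The estimates of Theorem \ref{aprioriest} then show that $\elle_1-\elle_0$ is compact, and since $\mathrm{Ker}(\elle_1)$ is trivial by Lemma \ref{weaksol}, the Fredholm alternative yields invertibility of $\elle_1$, i.e.\ existence in $V^{2,1}_\sigma$ and the bound \eqref{regolarita} with the $\|u\|^2_{L^2(\partial''Q_T)}$ term absorbed by the bounded inverse. If you want to keep your approximation scheme, you would still need some version of this decoupling-plus-compact-perturbation step (or an equivalent constructive regularity proof for smooth data); as written, the proposal assumes the hardest part.
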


\begin{proof} We proceed similarly to the proof of \cite[Theorem 3.3]{nostronazarov}. We introduce the set of operators $\elle_\mu\colon V^{2,1}_\sigma(Q_T,\partial''Q_T)\to L^2_\sigma (Q_T)\times L^2(\partial''Q_T)$ as follows:
$$\elle_\mu u:=\left(u_t-\Delta u+\mu\,au,\left(u_t-\Delta_\ell u+\mu\left(\frac{\partial u}{\partial\nu}+bu+\theta_s(u)\right)\right)\Big|_{\partial\Omega}\right).$$

We claim that the operator $\elle_0$ is invertible. Indeed, it corresponds to the boundary value problem
$$u_t-\Delta u=f \quad\,\text{in}\,\,Q_T,\qquad u_t-\Delta_\ell u=g\quad\,\text{on}\,\,\partial''Q_T, \qquad u(x,0)=0\quad\,\text{on}\,\,\overline\Omega. $$
Here the equation in $Q_T$ and the equation on $\partial''Q_T$ are decoupled. So we can first solve the boundary equation and then use its solution as the Dirichlet datum for the equation in the domain. The estimates similar to Theorem \ref{aprioriest}, combined with Proposition \ref{tilde}, show that the solution belongs to $V^{2,1}_\sigma (Q_T,\partial''Q_T)$ and inequality \eqref{regolarita} holds. So the claim follows.

The estimates in Theorem \ref{aprioriest} show that the operator
$$\elle_\mu-\elle_0\colon V^{2,1}_\sigma(Q_T,\partial''Q_T)\to L^2_\sigma (Q_T)\times L^2(\partial''Q_T);\qquad\elle_\mu u-\elle_0 u=\mu\left(au,\frac{\partial u}{\partial\nu}+bu+\theta_s(u)\right)$$ is compact. Since ${\text{Ker}}(\elle_1)$ is trivial by Lemma \ref{weaksol}, the operator $\elle_1$ is also invertible, and the proof is complete.
\end{proof}

If $\Omega$ is a convex polygon, then $\alpha<\pi$. Hence, we can choose $\sigma=0$ and we obtain the following result.

\begin{corol} Let $\Omega$ be a convex polygon. Suppose that $f\in L^2 (Q_T)$, $g\in L^2(\partial''Q_T)$, $a\in L^\infty (Q_T)$ and $b\in L^\infty (\partial''Q_T)$. Then the problem \eqref{pbform1}-\eqref{pbform3} has a unique solution $u\in H^{2,1}(Q_T)\cap H^{2,1}(\partial''Q_T)$, and the following inequality holds:
\begin{equation}\notag
\|u\|^2_{H^{2,1}(Q_T)}+\|u\|^2_{H^{2,1}(\partial''Q_T)}\leq C(\|f\|^2_{L^2(Q_T)}+\|g\|^2_{L^2(\partial''Q_T)}),
\end{equation}
where $C$ depends on $T$, $a$ and $b$.
\end{corol}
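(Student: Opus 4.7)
The plan is to obtain the corollary as the direct $\sigma=0$ specialization of the preceding theorem, using convexity only to ensure that $\sigma=0$ is admissible.

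First I would verify admissibility. Since $\Omega$ is a convex polygon, every interior angle is strictly less than $\pi$, so the opening of the largest angle satisfies $\alpha<\pi$ and hence $1-\pi/\alpha<0$. Consequently, the pair of constraints $1-\pi/\alpha<\sigma<\tfrac12$ and $\sigma\geq-\tfrac12$ from \eqref{boundsigma} is satisfied by $\sigma=0$.

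Next I would invoke the preceding theorem with this choice. For $\sigma=0$ the weight $r^\sigma$ equals $1$, so $L^2_\sigma(\Omega)=L^2(\Omega)$ and the hypothesis $f\in L^2(Q_T)$ is exactly the integrability required by that theorem. It therefore produces a unique $u\in V^{2,1}_0(Q_T,\partial''Q_T)$ solving \eqref{pbform1}-\eqref{pbform3}, together with the inequality \eqref{regolarita} in which every occurrence of $r^\sigma$ can be deleted.

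Finally I would translate membership in $V^{2,1}_0(Q_T,\partial''Q_T)$ into the unweighted spaces appearing in the statement. By definition of $V^{2,1}_0$, we have $u\in H^{1,0}(Q_T)$ together with $D^2u\in L^2(Q_T)$ and $u_t\in L^2(Q_T)$; combining these yields
\begin{equation*}
u\in L^2\bigl([0,T];H^2(\Omega)\bigr)\cap H^1\bigl([0,T];L^2(\Omega)\bigr)=H^{2,1}(Q_T).
\end{equation*}
For the boundary component, the inclusion $\gamma_0 u\in\tilde H^{2,1}(\partial''Q_T)\subset H^{2,1}(\partial''Q_T)$ is immediate from the definition of $\tilde H^{2,1}$. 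The desired estimate then follows by writing \eqref{regolarita} at $\sigma=0$ and absorbing the $H^{1,0}(Q_T)$ contribution into $\|u\|^2_{H^{2,1}(Q_T)}$.

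There is no genuine obstacle: the corollary is exactly the unweighted specialization of the main theorem that becomes available the moment convexity pushes the lower bound $1-\pi/\alpha$ below zero, and the only thing to check carefully is that $V^{2,1}_0(Q_T,\partial''Q_T)$ coincides with $H^{2,1}(Q_T)\cap H^{2,1}(\partial''Q_T)$ up to the trivial inclusion $\tilde H^{2,1}\subset H^{2,1}$ on the boundary.
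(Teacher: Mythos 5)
Your proposal is correct and follows exactly the paper's route: the paper simply observes that convexity gives $\alpha<\pi$, so $\sigma=0$ is admissible in \eqref{boundsigma}, and the corollary is the unweighted specialization of the preceding theorem. Your additional verification that $V^{2,1}_0(Q_T,\partial''Q_T)$ sits inside $H^{2,1}(Q_T)\cap H^{2,1}(\partial''Q_T)$ is a correct and worthwhile elaboration of a step the paper leaves implicit.
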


If $\Omega$ is not convex, then $\pi<\alpha<2\pi$. In this case the solution in general does not belong to $H^{2,1}(Q_T)$ even for $f=0$, see e.g. \cite{kozlovmazya} for the asymptotics of solution to the Dirichlet problem.

\begin{oss} In \cite{nostronazarov} we considered the elliptic case, in particular we proved that the solution of the elliptic problem belongs to $H^2(\partial\Omega)$. Actually, similarly to Proposition \ref{tilde}, we have that $u\in\tilde H^2(\partial\Omega)$. In turn, this implies that the hypothesis $s<\frac{3}{4}$ is not needed in \cite[Theorem 2.1]{nostronazarov}.
\end{oss}

\begin{oss} All our results easily hold for an arbitrary piecewise smooth domain $\Omega\subset\R^2$ without cusps.
\end{oss}

\appendix
\section{Appendix. The extension theorem}\label{appendice}
\setcounter{equation}{0}

\begin{teo}\label{ext}
 Let $u\in H^{2,1}(\partial''Q_T)$ and $u|_{t=0}=0$. Then there exists an extension $U\in H^{\frac{5}{2},\frac{5}{4}}(\R^2\times\R)$ such that $U|_{t=0}=0$, $U|_{\partial'' Q_T}=u$, and 
 \begin{equation}\label{stimaUbis}
\|U\|_{H^{\frac{5}{2},\frac{5}{4}}(\R^2\times\R)}\leq C\|u\|_{H^{2,1}(\partial''Q_T)}.
\end{equation}

\end{teo}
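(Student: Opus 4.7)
The plan is to construct $U$ in two stages: first extend $u$ in time, then localize on $\partial\Omega$ by a partition of unity to reduce to two model cases --- a smooth side, and a neighborhood of a single vertex. Since $u|_{t=0}=0$, I would extend $u$ to $t<0$ by zero, preserving the $H^{2,1}$ regularity thanks to the vanishing trace, and to $t>T$ by a smooth cutoff and reflection, obtaining $\tilde u\in H^{2,1}(\partial\Omega\times\R)$ with compact time support and $\|\tilde u\|\leq C\|u\|_{H^{2,1}(\partial''Q_T)}$. Next, I would take a smooth partition of unity $\{\chi_k\}$ on $\partial\Omega$ such that each $\chi_k$ is supported either strictly inside a single smooth side $l_j$ (type I), or in a small neighborhood of exactly one vertex $V_j$ covering only the two adjacent sides (type II). The global extension is $U=\sum_k U_k$.

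For type I cutoffs, a local chart flattens the support to a segment of a straight line, and the classical anisotropic Lions--Magenes extension theorem provides a continuous right inverse of the trace $H^{5/2,5/4}(\R^2\times\R)\to H^{2,1}(\R\times\R)$. For type II cutoffs concentrated at a vertex $V_j$ where sides $l_1,l_2$ meet at angle $\alpha_j$, write $v_i:=(\chi_k\tilde u)|_{l_i}\in H^{2,1}(l_i\times\R)$; continuity on $\partial\Omega$ gives the compatibility $v_1(V_j,\cdot)=v_2(V_j,\cdot)$. My plan is to extend each $v_i$ from the half-line $l_i$ to the whole line through $l_i$ (e.g.\ by symmetric reflection across $V_j$), apply Lions--Magenes to each to obtain $\tilde U_i\in H^{5/2,5/4}(\R^2\times\R)$, and blend via a smooth angular partition of unity $g_1+g_2=1$ (with $g_i=1$ on the ray $l_i$), setting $U_k:=g_1\tilde U_1+g_2\tilde U_2=\tilde U_2+g_1(\tilde U_1-\tilde U_2)$. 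The last expression makes evident the role of the compatibility at $V_j$: since $\tilde U_1-\tilde U_2$ vanishes at $V_j$ and the $\tilde U_i$ are H\"older continuous (via $H^{5/2}\hookrightarrow C^{1/2}$ in two spatial dimensions), the factors $(\tilde U_1-\tilde U_2)/r^k$ arising when derivatives of $g_1$ hit it are integrable up to the order required.

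The main obstacle is the vertex construction. Standard trace theory addresses smooth hypersurfaces, but $\partial\Omega$ is only Lipschitz at the vertices, so one cannot apply it directly; nor can one bypass the issue by subtracting a vertex-valued term $\varphi(x)u(V_j,t)$, because the point trace $u(V_j,\cdot)$ lies only in $H^{3/4-\varepsilon}_t$ (by interpolation between $L^2_t H^2_\ell$ and $H^1_t L^2_\ell$ with the point-trace constraint $2(1-\theta)>\tfrac{1}{2}$), strictly below the $H^{5/4}_t$ regularity required of such a term to sit in $H^{5/2,5/4}(\R^2\times\R)$. The delicate point is therefore to check that the angular blending above actually yields an $H^{5/2,5/4}(\R^2\times\R)$ function at the corner, which reduces to controlling the interaction between the $r^{-k}$ singularities of derivatives of the angular cutoff and the vanishing of $\tilde U_1-\tilde U_2$ at $V_j$ --- this should succeed up to the critical order $5/2$ precisely because of the H\"older regularity built into $H^{5/2}$ in two dimensions, and it is here that the polygonal geometry enters in an essential way.
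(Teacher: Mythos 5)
Your localization, the reduction to the two model cases (interior of a side, neighbourhood of a vertex), and the identification of the vertex as the only real difficulty all match the paper's strategy; your observation that one cannot simply subtract $\varphi(x)\,u(V_j,t)$ because the point trace lies only in $H^{3/4-\varepsilon}$ in time is also correct and pertinent. However, the angular blending $U_k=g_1\tilde U_1+g_2\tilde U_2$ does not work, and the gap is not a technicality one can ``check''. Since $H^{5/2}(\R^2)\hookrightarrow C^{1,1/2}(\R^2)$ (not merely $C^{1/2}$), any admissible $U_k(\cdot,t)$ must be $C^1$ at $V_j$ for a.e.\ $t$; but your blend coincides with $\tilde U_1$ on a sector around $l_1$ and with $\tilde U_2$ on a sector around $l_2$, and the gradients $\nabla\tilde U_1(V_j,t)$ and $\nabla\tilde U_2(V_j,t)$ are in general different, because the only compatibility available is the zeroth-order one $v_1(V_j,\cdot)=v_2(V_j,\cdot)$. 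Concretely, take $u$ linear with slope $1$ on $l_1$ and $u\equiv0$ on $l_2$ near $V_j$ (times a smooth function of $t$ vanishing at $t=0$): then $\tilde U_2$ may be taken $\equiv0$ locally, $W=\tilde U_1-\tilde U_2$ vanishes only to first order at the vertex, and the Leibniz term $Dg_1\cdot DW\sim r^{-1}$ is not even in $L^2_{\rm loc}(\R^2)$, so $U_k\notin H^{2}(\R^2)$ spatially, let alone $H^{5/2}$. The claimed integrability of $(\tilde U_1-\tilde U_2)/r^k$ thus already fails at $k=1$. A separate, fixable, flaw: even reflection of $v_i$ across $V_j$ creates a kink and lands only in $H^{3/2-\varepsilon}$ of the line, not $H^2$; one needs a higher-order reflection or a genuine $H^{2,1}$ half-line extension operator.

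The paper's proof is designed precisely to avoid multiplying by any cutoff singular at the vertex. After reducing by a linear change of variables to two orthogonal half-planes $\Pi^1_+,\Pi^2_+$ meeting on the $t$-axis, it extends $u|_{\Pi^2}$ by an explicit Slobodetskii-type operator $\mathcal{P}_2$ built from a compactly supported radial mollifier on the Fourier side, and then extends the correction $v=\bigl(u-\mathcal{P}_2(u|_{\Pi^2})\bigr)|_{\Pi^1}$, which vanishes on the $t$-axis, in such a way that the extension vanishes identically on $\Pi^2_+$. This uses two structural properties of $\mathcal{P}_1$: it preserves oddness in $x_1$ (so the odd part of $v$ extends to a function vanishing on $\{x_1=0\}$), and its kernel at height $x_2$ has support of radius $O(|x_2|)$, so the extension of the even part cut by the Heaviside function (legitimate in $H^{2,1}$ because the even part and its first $x_1$-derivative vanish at $x_1=0$) is supported in the wedge $\{x_1\ge-|x_2|\}$ and can be pushed off $\Pi^2_+$ by the shear $x_1\mapsto x_1-x_2$, which preserves $H^{5/2,5/4}$. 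If you wish to rescue a partition-of-unity scheme, you would need the two local extensions to agree to first order at the vertex, which is exactly the information that is unavailable; the parity-and-support mechanism is the substitute for it.
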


\begin{proof}
By localization, we can consider separately the extension from a face $l_j\times (0,T)$ and the extension from a neighborhood of a corner. Using standard extension from a face to the plane containing this face
$$
H^{2,1}(l_j\times (0,T)) \to H^{2,1}(\R\times\R),
$$
we reduce the first operation to the extension from a plane and the second one to the extension from a pair of half-planes intersecting on the $t$-axis. Using a proper linear coordinates transform, we can assume that these half-planes are orthogonal. Since $u|_{t=0}=0$, we can suppose without loss of generality that the extended function is odd w.r.t. $t$. Moreover, in what follows all extensions are supposed compactly supported.
\medskip

We now denote 
$$
\gathered
\Pi^1=\{(x_1,x_2,t)\in \R^2\times\R: x_2=0\}; \quad \Pi^2=\{(x_1,x_2,t)\in \R^2\times\R: x_1=0\}; \\
\Pi^j_{\pm}=\{(x_1,x_2,t)\in \Pi^j: x_j\gtrless0\}, \quad j=1,2.
\endgathered
$$

We introduce a mollifier $\phi(x_1,t)\in {\cal C}^\infty_0(\R\times\R)$ such that $\phi$ is radially symmetric, $\support(\phi)\subset B_1(0)$, and $\displaystyle\int_{\R\times\R}\phi\, dx_1\, dt=1$. The extension from the plane $\Pi_1$ is defined in a standard way \cite{Slob} via the 2D Fourier transform (here $(\xi_1,\tau)$ are the variables dual to $(x_1,t)$):
\begin{equation}
\widehat U(\xi_1,x_2,\tau)=
(\widehat{{\cal P}_1u})(\xi_1,x_2,\tau):=\widehat \phi\big((1+\xi_1^4+\tau^2)^{\frac 14}\,|x_2|\big)\cdot \widehat u(\xi_1,\tau).
\end{equation}
Direct (and standard) calculation using the Parseval identity provides the estimate (\ref{stimaUbis}). Moreover, since $u$ is odd w.r.t. $t$, ${\cal P}_1u$ is also odd w.r.t. $t$. In the same way we define the extension operator ${\cal P}_2$ from the plane $\Pi_2$.

To manage the extension from $\Pi^1_+\cup\Pi^2_+$, we first extend 
$$
H^{2,1}(\Pi^1_+)\cap H^{2,1}(\Pi^2_+)\to H^{2,1}(\Pi^1)\cap H^{2,1}(\Pi^2) 
$$
and we apply the operator ${\cal P}_2$ to $u|_{\Pi^2}$. It remains to extend the function 
$$
v=\big(u-{\cal P}_2(u|_{\Pi^2})\big)_{\Pi_1}
$$
so that the extension $V$ vanishes on $\Pi^2_+$.

We split $v$ into the sum $v=v_0+v_++v_-$, where
$$
v_0(x_1,t)=\frac {v(x_1,t)-v(-x_1,t)}2\quad\text{ and }\quad v_{\pm}(x_1,t)=\frac {v(x_1,t)+v(-x_1,t)}2\cdot\chi(\pm x_1)
$$
($\chi$ stands for the Heaviside function).
Since $v_0$ is odd w.r.t. $x_1$, the function $V_0={\cal P}_1v_0$ is also odd w.r.t. $x_1$ and thus vanishes on $\Pi^2_+$.

Next, we notice that the function $v(x_1,t)+v(-x_1,t)$ is even w.r.t. $x_1$ and vanishes on the line $x_1=0$. Therefore, $v_{\pm}\in H^{2,1}(\Pi^1)$.

Since $v_+$ is supported in $\Pi^1_+$, we immediately obtain that the support of the function ${\cal P}_1v_+$ lies in the wedge $x_1\ge -|x_2|$. Hence, the function
$$
V_+(x_1,x_2,t)=({\cal P}_1v_+)(x_1-x_2, x_2,t)
$$
is an extension of $v_+$ having the required smoothness and vanishing on $\Pi^2_+$. In a similar way, we define 
$$
V_-(x_1,x_2,t)=({\cal P}_1v_-)(x_1+x_2, x_2,t).
$$
Setting $V=V_0+V_++V_-$, the thesis follows.
\end{proof}

\bigskip

\noindent {\bf Acknowledgements.} S. C. and M. R. L. have been supported by the Gruppo Nazionale per l'Analisi Matematica, la Probabilit\`a e le loro Applicazioni (GNAMPA) of the Istituto Nazionale di Alta Matematica (INdAM). A. N. was partially supported by Russian Foundation for Basic Research grant 18-01-00472. M. R. L. would like to acknowledge networking support by the COST
Action CA18232.

\noindent This work was undertaken while A. N. was visiting the department of Basic and Applied Sciences for Engineering of Sapienza Universit\`a di Roma supported by St. Petersburg University (project 41126689) and by 
the international agreement between Sapienza Universit\`a di Roma and Steklov Mathematical Institute of Russian Academy of Sciences. 

\noindent We are thankful to Professor Vladimir Gol'dshtein (Ben-Gurion University) for the hint to the proof of Theorem \ref{ext} and to Professor Mikhail Surnachev (Keldysh Institute of Applied Mathematics) for some important comments.

\medskip


\end{document}